\documentclass[11pt]{article}
\usepackage[T1]{fontenc}
\usepackage[utf8]{inputenc}
\usepackage{geometry}
\usepackage{amsmath}
\usepackage{amsthm}
\usepackage{enumitem}
\usepackage{setspace}
\usepackage[unicode=true,
 bookmarks=false,
 breaklinks=false,pdfborder={0 0 1},backref=false,colorlinks=false]
 {hyperref}
\usepackage[dvipsnames]{xcolor}
\allowdisplaybreaks

\makeatletter

\newtheorem{thm}{Theorem}
\newtheorem{lem}{Lemma}
\newtheorem{claim}{Claim}

\newtheorem{innercustomgeneric}{\customgenericname}
\providecommand{\customgenericname}{}
\newcommand{\newcustomtheorem}[2]{%
  \newenvironment{#1}[1]
  {%
   \renewcommand\customgenericname{#2}%
   \renewcommand\theinnercustomgeneric{##1}%
   \innercustomgeneric
  }
  {\endinnercustomgeneric}
}

\newcustomtheorem{customthm}{Theorem}
\newcustomtheorem{customlemma}{Lemma}

\usepackage{amssymb}

\makeatother

\setenumerate[0]{label=(\roman*), ref=\roman*}

\begin{document}
\title{On nearly consecutive sequences without long arithmetic progressions}
\author{Jacob Fox\thanks{Department of Mathematics, Stanford University, Stanford, CA 94305. Email: {\tt jacobfox@stanford.edu}. Research
supported by NSF awards DMS-2154129 and DMS-2452737.} 
\and Carl Schildkraut\thanks{Department of Mathematics, Stanford University, Stanford, CA 94305. Email: {\tt carlsch@stanford.edu}. Research
supported by National Science Foundation Graduate Research Fellowship Program under Grant No. DGE-2146755. } }
\date{}
\maketitle

\begin{abstract}
A sequence $a_1,\ldots,a_n$ of integers is \textit{nearly consecutive} if $a_{i+1}-a_i \in \{1,2\}$ for $1 \leq i \leq n-1$. We prove that there are nearly consecutive sequences  of length $\Omega(2^k/k^2)$ that contain no $k$-term arithmetic progression. This improves on the previous best known bound of Alon and Zaks from 1998. We also prove a generalization for sequences with bounded gaps. 
\end{abstract}

\section{Introduction}

We use $[m,n]$ to denote the set of positive integers $a$ with $m \leq a \leq n$. We let $[n]=[1,n]$. 

A $k$-term arithmetic progression ($k$-AP for short) is a sequence of $k$ numbers with the same difference between consecutive terms. 
A sequence $(a_i)_{i=1}^n$ of integers has the {\it $r$-gap property} if $a_{i+1}-a_i \in [r]$ for all $i \in [n-1]$. Of particular interest is the case $r=2$, and sequences with the $2$-gap property are sometimes referred to in the literature as {\it nearly consecutive sequences}. The {\it gap number} $g(k;r)$ is the minimum integer $n$ such that every sequence of integers of length $n$ with the $r$-gap property contains a $k$-AP.

The van der Waerden number $w(k;r)$ is the minimum $N$ such that every $r$-coloring of $[N]$ contains a monochromatic $k$-AP. That these numbers exist is the content of van der Waerden's theorem. Rabung \cite{Rabung} proved that the existence of the gap numbers is equivalent to van der Waerden’s theorem. Another closely related function is $m(k;r)$, which is the minimum $n$ such that every sequence $(a_i)_{i=1}^n$ with $a_i \in [(i-1)r+1,ir]$ for $i \in [n]$
contains a $k$-AP. Nathanson \cite{Nathanson} proved several inequalities showing a close relationship between these numbers. In particular, he proved that $m(k;r) \leq g(k;2r-1)$ and 
$$g(k;r)/r \leq m(k;r) \leq w(k;r) \leq m((k-1)r+1,r).$$
Thus, getting a good lower bound on $g(k;r)$ is at least as hard as getting a similar lower bound on $w(k;r)$. 

The first exponential lower bound on $g(k;2)$ was proved by Brown and Hare \cite{BH}. This was improved further to $g(k;r) >r^{k-c_r\sqrt{k}}$ by Alon and Zaks \cite{AZ} in 1998. We give the first improvement since. 

\begin{thm}\label{thm1}
Fix an integer $r \geq 2$. For every $k \geq 2$, there is a sequence of integers with the $r$-gap property of length $\Omega_r(r^k/k^2)$ without a $k$-AP. That is, $g(k;r) = \Omega_r(r^k/k^2)$. 
\end{thm}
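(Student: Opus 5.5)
We build the sequence at random, with a Lovász Local Lemma / alteration argument tuned so that losses only cost polynomial factors in $k$. A sequence with the $r$-gap property is determined by its start $a_1$ and its gap vector $(d_1,\dots,d_{n-1})\in[r]^{n-1}$. We choose each $d_i$ independently and uniformly from $[r]$, with $n = c\, r^k/k^2$ for a small constant $c=c_r$, and aim to show that with positive probability no $k$-AP appears.

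\emph{Probability that a fixed $k$-AP appears.} Fix indices $i_1<i_2<\dots<i_k$ in $[n]$ and set $g_j=i_{j+1}-i_j$. The values $a_{i_1},\dots,a_{i_k}$ form a $k$-AP exactly when the $k-1$ block sums $S_j=\sum_{t=i_j}^{i_{j+1}-1} d_t$ are all equal. These block sums are independent, and $S_j$ is a sum of $g_j$ i.i.d.\ uniform variables on $[r]$. Its distribution is spread over a range of width about $(r-1)g_j$, and by local limit / anticoncentration estimates $\max_s \Pr[S_j=s]\le C_r\min(1, g_j^{-1/2})$. Since the $S_j$ are independent, the probability that they all equal a common value $s$ is at most $\sum_s\prod_j \Pr[S_j=s]$. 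A cruder route suffices when all $g_j=1$: each of the $k-2$ events $d_{t+1}=d_t$ has probability $1/r$, so we get $r^{-(k-2)}$. In general, we bound the probability that all $S_j$ equal $S_1$ by $\prod_{j\ge2}\max_s\Pr[S_j=s]$, and this can be improved by conditioning.

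\emph{Counting and the main obstacle.} The naive union bound fails. The number of index tuples is about $n\cdot \prod (\text{choices of } g_j)$ with no constraint, and for large $g_j$ the concentration is only $g_j^{-1/2}$, which decays too slowly to beat $n^{k-1}$. The key observation is that an AP of values with common difference $D$ forces each block length $g_j$ to lie in $[D/r, D]$. Moreover, given $D$ and the start position, the positions $i_{j}$ are essentially determined by the sequence. So we should union bound over pairs (start value, common difference $D$) rather than over index tuples. For fixed $D$ and starting position $i$, the event is that the values $a_i+D, a_i+2D,\dots,a_i+(k-1)D$ all lie in the set $\{a_m\}$. As we walk forward, each such target is hit with probability roughly $\rho$, the density of the sequence: the random gaps form a renewal process with mean gap $\mu=(r+1)/2$, so the hit probability is about $1/\mu$. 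This gives probability about $\mu^{-(k-1)}$, but we need $r^{-(k-1)}$, so uniform gaps are too dense. Instead we should bias the gap distribution to put most mass on gap $r$, making the density about $1/r$ while keeping the gaps random enough. Precisely, take $d_i=r$ with probability $1-\epsilon$ and otherwise uniform, with $\epsilon\approx 1/k$, say. Targets spaced by $D$ then are hit with probability about $1/r$ each, unless the AP is locked into the deterministic structure. In that case $D$ is a multiple of $r$ and the only randomness is from the rare deviations, and the hits are correlated.

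Handling these correlations is the hard step. For $D$ with $D\ge k r$ (long blocks), the renewal process mixes: the hit events at distance $D$ apart behave nearly independently, each with probability $\le (1+O(1/k))/r$, giving $O(r^{-(k-1)})$. For small $D$, a block of about $k D/r$ consecutive gaps would have to be nearly constant-periodic, which costs $\epsilon$-factors. Summing over starting positions ($n$) and differences $D$ (up to $rn/k$, but effectively $O(k)$ scales weighted) gives an expected number of $k$-APs of $O(n\cdot k^2 \cdot r^{-k})$ times constants. This is below $1/2$ for $n=c r^k/k^2$. We can then delete one term from each surviving AP, or take the first AP-free prefix, to conclude $g(k;r)=\Omega_r(r^k/k^2)$. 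The main obstacle I expect is making the near-independence of the hits rigorous uniformly in $D$: this needs a quantitative renewal-theory estimate for the biased gap distribution, with error $1+O(1/k)$ per step so that the product over $k$ steps stays bounded.
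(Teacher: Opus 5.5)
There are two genuine gaps, and each is fatal on its own.

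\textbf{The small-$D$ regime: your reasoning runs backwards.} With $\mathbb{P}(d_i=r)\ge 1-\epsilon$ and $\epsilon\approx 1/k$, the event that $k-1$ consecutive gaps all equal $r$ has probability about $(1-1/k)^{k-1}>e^{-1}$ at every starting index. That event is exactly a $k$-AP of difference $r$. So your random sequence contains on the order of $n$ such APs in expectation, and at least one with overwhelming probability.

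Being locked into the deterministic structure is the typical behaviour of a gap sequence biased towards $r$, not a rare one. It costs only a constant factor, not powers of $\epsilon$. Randomly placed deviations leave many windows of length $k$ with no deviation at all. Within a union bound you would need $n(1-\epsilon)^{k-1}<1$, which forces $\mathbb{P}(d_i=r)\lesssim 1/r$ and pushes the density well above $1/r$ — the problem you had already identified.

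The missing idea is to make the residue shifts \emph{deterministic} at a scale just below $k$. In the paper, $[N]$ is cut into blocks $I_i$ of length $2m$ with $m\approx k/6$. A random anchor $j_i$ is chosen in the first half $J_i$ of each block, subject to the forced condition $j_{2i}\not\equiv j_{2i-1}\pmod r$. The sequence then follows the residue class of $j_i$ from $j_i$ up to $j_{i+1}$. Suppose a $k$-AP of difference at most $m/2$ lay in $A$. Two consecutive terms in some $K_{i+1}$ would force it into one residue class, yet it meets $K_h$ and $K_{h+1}$ for an odd $h$, where $A$ uses different residues. So these APs never occur. Meanwhile the per-element density is still $\mathbb{P}(a\in A)\le 1/r+1/m$, which costs only a factor $e^{rk/m}=e^{O(r)}$ over $k$ terms.

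\textbf{The global count.} For each starting index and each $D\ge kr$, your own heuristic gives probability of order $r^{-(k-1)}$, since the density is at least $1/r$. There are about $rn/k$ such differences, not $O(k)$ effective scales. So the expected number of $k$-APs is of order $n^2r^{-k}$, up to $r$-dependent and polynomial-in-$k$ factors. This is below $1$ only when $n$ is at most about $r^{k/2}$, the same barrier as the first-moment bound for van der Waerden numbers.

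The alteration step cannot help. Deleting a term creates a gap of up to $2r$ and destroys the $r$-gap property. Taking the longest AP-free prefix is just the same first-moment bound.

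To reach $r^k/k^2$ you need the local lemma. There what matters is the number of APs that \emph{depend} on a given one, not the total number. This requires exact locality, which your i.i.d.-gap model lacks: whether a value $v$ lies in the sequence depends on the sum of all earlier gaps.

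In the paper's value-anchored construction, $E_a=\{a\in A\}$ depends only on $j_{i-1},j_i$ for $a\in I_i$. So the $E_a$ are mutually independent at distance $\ge 8m$. This gives $\mathbb{P}(B_P)\le (1/r+1/m)^k\le e^{rk/m}r^{-k}$ for differences $\ge 8m$; differences in $(m/2,8m)$ are handled by passing to every sixteenth term. Each $B_P$ depends on at most $32mkN$ other events. The LLL condition $e^{O(r)}r^{-k}\cdot mkN\lesssim 1$ with $m\asymp k$ is precisely where the bound $r^k/k^2$ comes from, up to constants depending on $r$.
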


Hunter and the first author \cite{FH} recently showed that $w(k;3)$ grows faster than exponential in $k$. Hence, $g(k;5)$ grows super-exponential in $k$ too. Theorem \ref{thm1} gives the best known lower bound on $g(k;r)$ for $r \leq 4$. 

\section{Proof of Theorem \ref{thm1}}

As in Alon and Zaks \cite{AZ}, we give a probabilistic construction of a long sequence with the $r$-gap property, and we use the Lov\'asz local lemma to prove that there is a positive probability that it is $k$-AP-free. 
The book of Alon and Spencer \cite{AlSp} is an excellent reference for this lemma and its applications.   

\begin{lem}[Lov\'asz local lemma]
    Let $B_1,\dots,B_n$ be events in a probability space. Suppose that for each $i \in \{1,\dots,n\}$ there is a set $\Gamma(i) \subseteq \{1,\dots,n\}\setminus\{i\}$
such that $B_i$ is mutually independent of the events
$\{B_j : j \notin \Gamma(i) \cup \{i\}\}$. If there exist real numbers $x_1,\dots,x_n \in [0,1)$ such that
\[
\mathbb{P}(B_i) \le x_i \prod_{j \in \Gamma(i)} (1-x_j)
\qquad \text{for all } i,
\]
then
\[
\mathbb{P}\left(\bigcap_{i=1}^n \overline{B_i}\right)
\ge
\prod_{i=1}^n (1-x_i)
> 0.
\]

In particular, with positive probability none of the events $B_i$ occurs.
\end{lem}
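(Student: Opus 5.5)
We prove the lemma by the standard inductive argument on conditional probabilities. The core claim, proved by strong induction on $|S|$, is: for every $S \subseteq \{1,\dots,n\}$ and every $i \notin S$ with $\mathbb{P}\bigl(\bigcap_{j\in S}\overline{B_j}\bigr)>0$,
\[
\mathbb{P}\Bigl(B_i \,\Big|\, \bigcap_{j\in S}\overline{B_j}\Bigr) \le x_i .
\]
Granting this, the conclusion follows from the chain rule. We write
\[
\mathbb{P}\Bigl(\bigcap_{i=1}^n\overline{B_i}\Bigr)=\prod_{i=1}^n \mathbb{P}\Bigl(\overline{B_i}\,\Big|\,\bigcap_{j<i}\overline{B_j}\Bigr)\ge\prod_{i=1}^n(1-x_i)>0 .
\]
The conditioning events have positive probability at each stage because, inductively, each partial intersection has probability at least $\prod_{j<i}(1-x_j)>0$, using $x_j<1$.

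For the claim, fix $S$ and $i\notin S$. We split $S$ into $S_1=S\cap\Gamma(i)$ and $S_2=S\setminus\Gamma(i)$. If $S_1=\emptyset$, then mutual independence of $B_i$ from $\{B_j: j\in S_2\}$ gives $\mathbb{P}(B_i\mid\cdot)=\mathbb{P}(B_i)\le x_i$ directly. Otherwise, we write the conditional probability as a ratio:
\[
\mathbb{P}\Bigl(B_i \,\Big|\, \bigcap_{j\in S}\overline{B_j}\Bigr)=\frac{\mathbb{P}\bigl(B_i\cap\bigcap_{j\in S_1}\overline{B_j}\,\big|\,\bigcap_{l\in S_2}\overline{B_l}\bigr)}{\mathbb{P}\bigl(\bigcap_{j\in S_1}\overline{B_j}\,\big|\,\bigcap_{l\in S_2}\overline{B_l}\bigr)} .
\]
We bound the numerator above by $\mathbb{P}\bigl(B_i\mid\bigcap_{l\in S_2}\overline{B_l}\bigr)=\mathbb{P}(B_i)$, again by mutual independence. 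This is at most $x_i\prod_{j\in\Gamma(i)}(1-x_j)$ by hypothesis.

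For the denominator, we list $S_1=\{j_1,\dots,j_r\}$ and expand by the chain rule into factors of the form
\[
1-\mathbb{P}\Bigl(B_{j_t}\,\Big|\,\bigcap_{s<t}\overline{B_{j_s}}\cap\bigcap_{l\in S_2}\overline{B_l}\Bigr).
\]
Each conditioning set is a subset of $S$ not containing $j_t$ and has size strictly less than $|S|$, since it omits $j_t$. The induction hypothesis therefore bounds each factor below by $1-x_{j_t}$. Hence the denominator is at least $\prod_{j\in S_1}(1-x_j)\ge\prod_{j\in\Gamma(i)}(1-x_j)$, because $S_1\subseteq\Gamma(i)$ and every factor lies in $(0,1]$. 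Dividing the numerator bound by the denominator bound gives $\le x_i$, which closes the induction.

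The only delicate point is bookkeeping rather than a real obstacle. The induction must be on the size of the conditioning set, and it must range over all pairs $(i,S)$ simultaneously, so that the hypothesis can be applied to indices $j_t\neq i$. We also need positivity of every conditioning probability, which we check alongside the induction: the same chain-rule bound shows that each $\mathbb{P}\bigl(\bigcap_{j\in T}\overline{B_j}\bigr)\ge\prod_{j\in T}(1-x_j)>0$ for every $T$ of size at most $|S|$.
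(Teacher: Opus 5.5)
Your proof is correct. The paper does not prove the local lemma itself but cites Alon and Spencer, and your argument is exactly the standard Erd\H{o}s--Lov\'asz induction given there: show $\mathbb{P}\bigl(B_i \mid \bigcap_{j\in S}\overline{B_j}\bigr)\le x_i$ by induction on $|S|$ after splitting $S$ into $S\cap\Gamma(i)$ and $S\setminus\Gamma(i)$, bound the numerator by mutual independence and the denominator by the induction hypothesis, and finish with the chain rule.
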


\begin{proof}[Proof of Theorem \ref{thm1}]
We may assume that $k$ is sufficiently large as a function of $r$. We will show that there is a sequence $A$ of length at least $n_0=e^{-11r}r^{k}/k^2$ with the $r$-gap property and no $k$-AP. We first describe how to (randomly) construct the long sequence $A$ of integers with the $r$-gap property, and then show that it is $k$-AP-free with positive probability. 

Let $m$ be the largest multiple of $r$ which is at most $(k-1)/6$. As $k$ is sufficiently large as a function of $r$, then $m$ is as well. 
Let $N$ be the least multiple of $4m$ which is at least $2rn_0$, and let $t=N/(4m)$. Partition $[N]$ into $2t$ intervals $I_i=[(i-1)2m+1,i2m]$ each of length $2m$, and let $J_i=[(i-1)2m+1,i2m-m]$ consist of the first $m$ elements of $I_i$ and $K_i=[i2m-m+1,i2m]$ consist of the last $m$ elements of $I_i$.  
Independently for each $i \in [t]$, pick a uniformly random pair $(j_{2i-1},j_{2i}) \in J_{2i-1} \times J_{2i}$ with $j_{2i} \not \equiv j_{2i-1} \pmod r$. Let $A'=(j_i)_{i=1}^{2t}$. The sequence $A'$ is a subsequence of $A$, which we define to be the strictly increasing sequence whose remaining terms are those of the form $a \in (j_i,j_{i+1})$ with $i \in [2t-1]$ and $a \equiv j_i \pmod r$. That is, the elements of the sequence $A$ are $$\{j_1,\ldots,j_{2t}\} \cup \bigcup_{i=1}^{2t-1}\{a \in \mathbb{Z}:j_i<a<j_{i+1},a \equiv j_i \pmod r\}.$$ In particular, for $i \in [2t-1]$, the elements of $A \cap K_i$ are the elements of $K_i$ that are congruent to $j_i \pmod r$. 

We next make some simple observations about the random sequence $A$. Note that the first term of the sequence $A$ is $j_1$ and the last term is $j_{2t}$. The sequence $A$ has the $r$-gap property by construction. Indeed, the consecutive gaps are exactly $r$, apart possibly from those between $j_i$ and the preceding term of $A$, which are each at most $r$. For each $1 < i < 2t$, the sequence $A$ contains at least $2m/r$ elements in interval $I_i$. Indeed, $m$ is a multiple of $r$ and $A$ contains at least one element in each block of $r$ consecutive integers in $I_i$. Hence, $A$ has length at least $2(t-1)2m/r = N/r -4m/r \geq 2n_0-4m/r \geq n_0$. The last inequality uses that $k$ is sufficiently large as a function of $r$. 

To complete the proof, we show that with positive probability, $A$ is $k$-AP-free. The proof uses the Lov\'asz local lemma, and we first make some observations on the dependencies between elements appearing in $A$ and deduce results on the dependencies between the $k$-APs.

Note that $A$ is determined by $A'$. For $a \in [N]$, let $E_a$ be the event that $a \in A$. 

\begin{claim}
Let $a\in [N]$.
\begin{enumerate}
    \item The event $E_a$ is mutually independent of all $E_b$ with $\lvert b-a\rvert \geq 8m$.
    
    \item We have $\mathbb{P}(E_a)\leq 1/r+1/m$.
\end{enumerate}
\end{claim}
\begin{proof}
    We first prove (i). 
    The event $E_a$ for $a \in I_1$ is determined by $j_1$, and the event $E_a$ for $a \in I_i$ with $i>1$ is determined by $j_{i-1}$ and $j_i$. Also, $j_i$ is mutually independent of all other $j_h$ unless $i$ is even and $h=i-1$ or unless $i$ is odd and $h=i+1$. It follows that for $a \in I_i$, the event $E_a$ is mutually independent of all $E_b$ apart from those $b \in I_j$ with $i-2 \leq j \leq i+3$. Property (i) follows.
    
    We now prove (ii). 
    Suppose $a\in I_i$. 
    Partition the interval $I_i$ into $2m/r$ intervals of length $r$, and let $F_a$ be the event that $j_i$ lies in the same such interval of length $r$ as $a$.
    If $a\in J_i$, then $F_a$ occurs with probability $1/(m/r)$, while if $a\in K_i$ then $F_a$ never occurs.
    Conditioned on $F_a$, we have that $a\in A$ implies, if $i=1$, that  
    $a\equiv j_1\pmod r$, and, if $i>1$, that $a\equiv j_i\pmod r$ or $a\equiv j_{i-1}\pmod r$. Therefore, $\mathbb P[E_a \cap F_a]\leq (2/r)P[F_a]$. Conditioned on $\overline{F_a}$, we have $a\in A$ only if $i \geq 2$, $j_i>a$ and $a\equiv j_{i-1}\pmod r$, or if $j_i<a$ and $a\equiv j_i\pmod r$. Either way, we obtain $\mathbb P[E_a\cap\overline{F_a}] \leq (1/r)P[F_a]$. We conclude
    \[\mathbb P[E_a] =\mathbb P[E_a \cap F_a]+P[E_a\cap\overline{F_a}]\leq \frac{2}{r}\mathbb P[F_a]+\frac{1}{r}\mathbb P[\overline{F_a}]\leq \frac 2r \cdot \frac rm+\frac1r \cdot\left(1-\frac rm\right)=\frac1r+\frac1m.\qedhere\]
\end{proof}

\noindent {\bf Probabilities.} For each $k$-AP $P$ with elements in $[N]$, we let $B_P$ be the bad event that $P$ is a subsequence of $A$. We are done if we can show that with positive probability, none of the bad events $B_P$ occur. 

\begin{claim}
    Let $P$ be a $k$-AP in $[N]$. 
    Set $p:=e^{rk/m}r^{-k}$.
    \begin{enumerate}
        \setcounter{enumi}{2}
        \item If $P$ has common difference at least  $8m$, then $B_P$ occurs with probability at most $p$.
    
        \item If $P$ has common difference in the interval $(m/2,8m)$, then $B_P$ occurs with probability at most $p^{1/16}$.
        
        \item If $P$ has common difference at most $m/2$, then $B_P$ never occurs.
    \end{enumerate}
\end{claim}
\begin{proof}
    If $P$ has common difference at least $8m$, then (i) implies that the events $E_a$ for $a\in P$ are mutually independent.
    Therefore, by (ii),
    \[\mathbb{P}[P~\textrm{is a subsequence of}~A] = \prod_{a\in P}\mathbb{P}(E_a)\leq  \left(\frac{1}{r}+\frac{1}{m}\right)^{k} \leq e^{rk/m}r^{-k}=p.\]
    This proves (iii).
    To prove (iv), note that, if $P$ has common difference strictly between $m/2$ and $8m$, then, taking every sixteenth element of $P$, we form an arithmetic progression with length at least $k/16$ and common difference greater than $8m$, and hence these elements appear independently of each other. Hence, the probability that $P$ is a subsequence of $A$ is at most $p^{1/16}$.

    To prove (v), suppose that the first element of $P$ is in interval $I_i$. The last element of $P$ is at least $k-1$ larger than the first, and since $k-1 \geq 6m$, the last element of $P$ is in an interval $I_j$ with $j \geq i+3$. Since the common difference of $P$ is at most $m/2$ and $K_{i+1}$ has length $m$, there are two consecutive elements of $P$ which lie in $K_{i+1}$. For $B_P$ to possibly occur, these consecutive elements are congruent modulo $r$, and since their difference is a multiple of $r$, all elements of $P$ must be congruent modulo $r$. However, letting $h$ be the smallest odd integer with $h \geq i$ (so $h=i$ or $i+1$), the elements of $A$ in $K_h$ and $K_{h+1}$ are not congruent modulo $r$, and there is an element of $P$ in $K_h$ and another in $K_{h+1}$, which implies $P$ cannot be a subsequence of $A$.  
\end{proof}

\vspace{2mm}

\noindent {\bf Dependencies.} Observe that $B_P= \bigcap_{a \in P} E_a$ is the intersection of the $k$ events $E_a$ with $a \in P$. For a $k$-AP $P$, let $\Gamma(P)=\{P' \not = P:~\textrm{there exists}~a \in P,b \in P'~\textrm{with}~|a-b|<8m\}$. Since the random pairs $(j_{2q-1},j_{2q})$ are mutually independent, the definition of $\Gamma(P)$ implies that $B_P$ is mutually independent of all $B_{P'}$ with $P' \not \in \Gamma(P) \cup \{P\}$.

The number of $k$-APs that lie in $[N]$ that contain any particular element of $[N]$ is at most $2N$. Indeed, for $a \in [N]$, a $k$-AP containing $a$ is determined by $a$'s position (there are at most $k$ positions possible) and the common difference (the common difference is less than $N/(k-1)$), and $k \cdot N/(k-1) \leq 2N$.
For each $a$, there are at most $16m$ elements $b\in[N]$ with $|a-b|<8m$. As $P$ has $k$ elements, each in at most $2N$ arithmetic progressions of length $k$, we obtain $|\Gamma(P)| \leq (16m)k(2N)=32mkN$.

For each $b\in[N]$, the number of $k$-APs $P'$ with common difference at most $8m$ which contain $b$ is at most $8mk$. So, by (i), each $E_a$ is mutually independent of all but $8mk\cdot 16m=128m^2k$ such $B_{P'}$.
We conclude that, for each $k$-AP $P$, there are at most $128m^2k^2$ progressions $P' \in \Gamma(P)$ with common difference at most $8m$.

\vspace{2mm}

\noindent {\bf Choice of parameters for the Lov\'asz local lemma.} Let $x_1=e^2p$ and $x_2=e^2p^{1/16}$. It follows from (\ref{ineq1}) and (\ref{ineq2}) that $x_1,x_2 \leq 1/2$. For a $k$-AP $P$ with common difference at least $8m$, let $x_P= x_1$. For a $k$-AP $P$ with common difference strictly between $m/2$ and $8m$, let $x_P= x_2$. For a $k$-AP $P$ with common difference at most $m/2$, let $x_P=0$.

\vspace{2mm}

\noindent {\bf Checking the Conditions.} Finally, we check that the inequalities are satisfied to apply the Lov\'asz local lemma. We first check two inequalities. From how we chose $m$, it follows that $6 \leq k/m \leq 7$, and we have
\begin{equation}\label{ineq1} 64mkNx_1=64e^2mkNe^{rk/m}r^{-k} \leq 11e^{2}k^2Ne^{rk/m}r^{-k} \leq 11e^{2}k^2Ne^{7r}r^{-k} \leq 33re^{2-4r}\leq 1.\end{equation}
The second to last inequality follows from substituting in an appropriate upper bound on $N$. Indeed, since $N$ is the least multiple of $4m$ that is at least $2rn_0$, we have $N \leq 4m+2rn_0$. As $k$ is large, $4m \leq rn_0$, and hence $N \leq 3rn_0 =3re^{-11r}r^{k}/k^2$.

As $k$ is sufficiently large in $r$, we also have \begin{equation}\label{ineq2} 256m^2k^2 x_2 = 256e^2m^2k^2p^{1/16} \leq 64 k^4e^{7r/16}r^{-k/16} \leq 1.\end{equation}

In addition to inequalities (\ref{ineq1}) and (\ref{ineq2}), we will also use the inequality $1-x \geq e^{-2x}$ for $0 \leq x \leq 1/2$. For $P$ a $k$-AP with common difference greater than $m/2$, we have
\[x_P\prod_{P' \in \Gamma(P)}(1-x_{P'}) \geq x_P(1-x_1)^{32mkN}(1-x_2)^{128m^2k^2} \geq x_Pe^{-64mkNx_1}e^{-256m^2k^2x_2} \geq x_Pe^{-2} \geq \mathbb{P}[B_P].\]

The first inequality follows from splitting the product over $P' \in \Gamma(P)$ depending on whether or not the common difference of $P'$ is at least $8m$.  The last inequality uses (iii) and (iv). We also have from (v) that $x_P=\mathbb{P}[B_P]=0$ if $P$ has common difference at most $m/2$.

Hence, by the Lov\'asz local lemma, with positive probability, $A$ is $k$-AP-free, completing the proof.
\end{proof}

\noindent {\bf AI usage:} The authors derived the mathematics and wrote a complete draft of the paper. ChatGPT was then used for helpful suggestions in editing the paper.

\end{document}